\theoremstyle{plain}
\newtheorem{The}{Theorem}
\newtheorem*{The*}{Theorem}
\newtheorem{Lem}{Lemma}
\newtheorem*{Cor*}{Corollary}
\theoremstyle{definition}
\newtheorem*{Def}{Definition}
\newtheorem{Rem}{Remark}
\newtheorem{Exa}{Example}
\newtheorem*{Rem*}{Remark}
\numberwithin{equation}{section}
\renewcommand{\Im}{\operatorname{Im}}
\renewcommand{\Re}{\operatorname{Re}}
\newcommand{\R}{\mathbb{R}}
\newcommand{\Q}{\mathbb{Q}}
\newcommand{\C}{\mathbb{C}}
\newcommand{\N}{\mathbb{N}}
\newcommand{\Z}{\mathbb{Z}}
\renewcommand{\H}{\mathbb{H}}
\newcommand{\ii}{\mathbbm i}
\newcommand{\jj}{\mathbbm j}
\newcommand{\kk}{\mathbbm k}
\begin{document}

\title{Dirac Tori}

\author{Lynn Heller}

\address{ Institut f\"ur Mathematik\\  Universit{\"a}t T\"ubingen\\ Auf der Morgenstelle
10\\ 72076 T\"ubingen\\ Germany
 }
 \email{lynn-jing.heller@uni-tuebingen.de}



\date{\today}


\begin{abstract}
We consider conformal immersions $f: T^2\rightarrow \R^3$ with the property that $H^2 f^*g_{\R^3}$ is  a flat metric. These so called Dirac tori have the property that its Willmore energy is uniformly distributed over the surface and can be obtained using spin transformations of the plane by eigenvectors of the standard Dirac operator for a fixed eigenvalue. We classify Dirac tori and determine the conformal classes  realized by them. We want to note that the spinors of Dirac tori satisfies the same system of PDE's as the differential of  Hamiltonian stationary Lagrangian tori in $\R^4$. These were classified in \cite{HR} . 
 \end{abstract}
  
\maketitle


\section{Introduction}

Dirac surfaces are defined to be conformal immersions from a (compact) Riemann surface $M$ into $\R^3$ such  that the metric $H^2 f^*g_{\R^3}$ has constant Gau\ss ian curvature, where $H$ is the mean curvature and $f^*g_{\R^3}$ is 
the first fundamental form of the surface. 
Thus Dirac surfaces are obtained by spin transformations of a reference surface - an immersed surface of constant Gau\ss ian and mean curvature in $\R^3$- by eigenvectors of the corresponding Dirac operator, see equation \eqref{Rhoequation}.  For compact and oriented surfaces the sign of the Gau\ss ian curvature is already determined by the topology of the surface and there are three cases to consider. The universal covering of $M$ is either a sphere, a plane or the hyperbolic plane.
In the case of positive  Gau\ss ian curvature we get thus Dirac spheres which are well understood by now, see \cite{BP}. The negative Gau\ss ian curvature case is more difficult to handle since the hyperbolic plane cannot be isometrically immersed into $\R^3$ by a theorem of Hilbert. For vanishing Gau\ss ian curvature the compact Riemann surface $M$ is a torus and the universal covering of $M$ is the plane. Using the plane as reference surface, i.e., the metric  $H^2 f^*g_{\R^3}$ is isometric to a constant multiple of the standard metric $g_{\R^2},$ we get that $H |df|$ is constant for Dirac tori. Moreover, since the Willmore energy is given by $\int_M H^2 |df|^2$, the Willmore energy of Dirac surfaces with flat metric $H^2 f^*g_{\R^3}$ is uniformly distributed over the surface.  Dirac surfaces with a doubly periodic differential can be parametrized using trigonometric functions. In order to obtain a doubly periodic surface we need to investigate closing conditions. It turns out that only conformal classes satisfying certain rationality conditions provide Dirac tori, see theorem \ref{Dirac}.

In this paper we first explain the general setup of spin transformations and how eigenvectors of Dirac operator conformally transforms a reference surface. Then we show that the Dirac surface property can be translated into an eigenvalue problem and derive the corresponding differential equations. Thereafter, we compute the double periodic spin transformations and compute the closing conditions to obtain closed Dirac tori. Further, we determine which conformal classes actually provide Dirac tori. \\

The author would like to thank Ulrich Pinkall for helpful discussions.

\section{Classification}
Let $f: M \rightarrow \R^3$ be a conformal immersion from a Riemann surface $M$ with first fundamental form $f^*g_{\R^3}$ and mean curvature $H$. The immersion is called a {\it Dirac surface} if $H^2 f^*g_{\R^3} $ is a metric with constant Gau\ss ian curvature.
The property of being a Dirac surface is scale invariant. Further, if $M$ is a compact Riemann surface, the topology of the surface determines the sign of the Gau\ss ian curvature of the metric $H^2 f^*g_{\R^3}.$  

We consider the euclidean $3-$space as the space of purely imaginary quaternions $\Im
\H := \{a \in \H | a + \bar a = 0 \}$. In this picture the quaternionic multiplication coincides with the cross product of $\R^3$ and any stretch rotation $R$ of $\R^3$ is given by $$R(x)  = \bar \lambda x \lambda,$$ for an appropriate $\lambda \in \H$ and vise versa. 
The quaternions $\H$ can be turned into a complex vector space by choosing the right multiplication with $\ii$ to be the complex structure. Thus  we can identify $\H = \C \oplus \jj \C$ and every $\lambda \in \H$ can be written as $\lambda = \lambda_1 + \jj \lambda_2$ for two complex numbers $\lambda_1$ and $\lambda_2.$

Let $\lambda: M \rightarrow \H$ be a quaternionic valued function. Then we consider the $1-$form
$$\eta = \bar \lambda df \lambda.$$
On a simply connected domain $\eta$ integrates to a surface $\tilde f$ if and only if $d\eta = 0.$
Then, following \cite{KPP}, we call the map $\tilde f$  a {\it spin transformation} of $f$. A reformulation of the condition
 $d\eta=0$ is stated in the lemma below, which can be found in \cite{CPS, KPP}
\begin{Lem}[\cite{KPP, CPS}]\label{lemma}
Let $f: U \subset M  \rightarrow \R^3$ be a conformal immersion and let $\eta = \bar \lambda df \lambda.$
The $1-$form $\eta$ locally integrates to a conformal immersion $\tilde f$ if and only if there is a real valued function $\rho$ with 
$$D \lambda = \rho \lambda,$$
where $$D \lambda := -\frac{df \wedge d\lambda}{f^*vol}$$ is the Dirac operator of the immersion $f.$ 
\end{Lem}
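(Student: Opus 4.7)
My plan is to compute $d\eta$ directly and translate its vanishing into the asserted eigenvalue equation; conformality of $\tilde f$ is then essentially automatic. Since $\eta$ lives on a simply connected patch, the Poincar\'e lemma tells us that the existence of a primitive $\tilde f$ with $d\tilde f=\eta$ is equivalent to $d\eta=0$, so everything reduces to unpacking that closedness condition.

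For the closedness, I apply the quaternionic Leibniz rule, being careful about the order of multiplication, together with $d(df)=0$, to obtain
$$d\eta=d\bar\lambda\wedge df\cdot\lambda-\bar\lambda\,df\wedge d\lambda.$$
The next step is to rewrite the first summand in terms of the second, using the general identity $\overline{\alpha\wedge\beta}=-\bar\beta\wedge\bar\alpha$ valid for $\H$-valued one-forms, together with the fact that $f$ is $\Im\H$-valued so $d\bar f=-df$. This yields $d\bar\lambda\wedge df=\overline{df\wedge d\lambda}$, and substituting the definition $df\wedge d\lambda=-(D\lambda)\,f^*vol$ then gives
$$d\eta=\bigl(\bar\lambda\,D\lambda-\overline{D\lambda}\cdot\lambda\bigr)\,f^*vol=2\,\Im(\bar\lambda\,D\lambda)\,f^*vol.$$

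Hence $d\eta=0$ if and only if $\bar\lambda\,D\lambda$ is real-valued, and at any point where $\lambda\neq 0$ this is equivalent to $D\lambda=\rho\lambda$ with the real function $\rho=\bar\lambda\,D\lambda/|\lambda|^2$. It remains to see that the primitive $\tilde f$ is an $\Im\H$-valued conformal immersion. The first is clear since $\overline{\bar\lambda v\lambda}=-\bar\lambda v\lambda$ for $v\in\Im\H$, so $\eta$ is purely imaginary; the second is clear pointwise, as $d\tilde f(X)=\bar\lambda\,df(X)\,\lambda$ is the image of $df(X)$ under the stretch rotation $x\mapsto\bar\lambda x\lambda$ of scaling factor $|\lambda|^2$, so the conformality of $f$ is inherited and $\tilde f$ is an immersion wherever $\lambda\neq 0$.

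The main obstacle is purely bookkeeping: one must commute conjugation past a wedge product of $\H$-valued one-forms, which reverses the order of factors and introduces a sign, and then exploit the imaginarity of $f$ to fold $d\bar\lambda\wedge d\bar f$ back into the shape appearing in $D\lambda$. Once that identity is in hand, the passage from a closedness condition on $\eta$ to a real-eigenvalue equation for $\lambda$ is essentially algebraic.
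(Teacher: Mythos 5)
Your proof is correct and follows essentially the same route as the paper: compute $d\eta$ by the quaternionic Leibniz rule, use $\overline{\alpha\wedge\beta}=-\bar\beta\wedge\bar\alpha$ together with $\bar f=-f$ to fold the two terms into $2\Im(\bar\lambda\,D\lambda)\,f^*vol$, and read off that closedness is equivalent to $D\lambda=\rho\lambda$ with $\rho$ real. Your bookkeeping of signs is in fact more careful than the paper's (which has a sign slip in the Leibniz step but reaches the same conclusion), and you additionally verify that the primitive is an $\Im\H$-valued conformal immersion, a point the paper's proof leaves implicit.
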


\begin{proof}
The exterior derivative of $\eta$ is given by
$$d \eta = \bar \lambda df \wedge \lambda + d \bar \lambda \wedge df \lambda.$$
Since $f$ maps into the space of purely imaginary quaternions, i.e., $\bar f = -f,$
we obtain $$\overline{d \bar \lambda \wedge df \lambda} = - \bar \lambda df \wedge d\lambda.$$
Thus $d\eta = 0$ if and only if $ \bar \lambda df \wedge d\lambda$ is purely real valued.

This means that there is a real $2-$form $\rho (f^*vol)$ with
$\bar \lambda df \wedge d\lambda = \rho (f^*vol)$ or equivalently $$-\frac{df \wedge d\lambda}{f^*vol} = \rho \lambda.$$
\end{proof}

A natural question is how the invariants of the surface are transformed. This leads naturally to the notion of the mean curvature half density:
\begin{Def}
Let $f: M \rightarrow \R^3$ be a conformal immersion with mean curvature $H$. Let 
$|df|$ be a $1-$form given by 
$|df| : TM \rightarrow \R, X \mapsto df(X)$.
Then  we call the quantity $H |df|$ the mean curvature half density of $f$.
\end{Def}
\begin{Lem}[\cite{KPP}]
Let $\tilde f$ be a spin transformation of $f.$  With the same notations as in lemma \ref{lemma} we have

\begin{equation}\label{Rhoequation}
\tilde H |d\tilde f| = H |df| + \rho |df|.\end{equation}
\end{Lem}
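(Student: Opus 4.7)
The plan is to reduce the identity of half-densities to the scalar identity $\tilde H|\lambda|^2 = H + \rho$, noting that $d\tilde f = \bar\lambda df\lambda$ immediately gives $|d\tilde f| = |\lambda|^2|df|$. The scalar identity will be verified by expanding the Euclidean Laplacian of $\tilde f$ in flat local coordinates on $M$ and matching the resulting decomposition against the reference immersion $f$.

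First I will determine the Gauss map $\tilde N$ of $\tilde f$. The identity $*df = N\,df$, valid for any conformal immersion into $\Im\H$, together with the conformal invariance of the two-dimensional Hodge star, gives $*d\tilde f = \bar\lambda N df\lambda$; comparing with $*d\tilde f = \tilde N d\tilde f = \tilde N\bar\lambda df\lambda$ yields $\tilde N = \bar\lambda N\lambda/|\lambda|^2$. Writing $f_i = \partial_i f$ and $\tilde f_i = \partial_i\tilde f = \bar\lambda f_i\lambda$, the product rule then decomposes
\begin{equation*}
\Delta_{g_{0}}\tilde f = \tilde f_{xx}+\tilde f_{yy} = \bar\lambda(f_{xx}+f_{yy})\lambda + \sum_{i=x,y}\bigl(\partial_i\bar\lambda\,f_i\,\lambda + \bar\lambda\,f_i\,\partial_i\lambda\bigr).
\end{equation*}
Substituting the reference mean-curvature identity $f_{xx}+f_{yy} = 2H|df|^2 N$ makes the first term equal to $2H|df|^2\bar\lambda N\lambda$ and, using $\Delta_{g_0}\tilde f = 2\tilde H|d\tilde f|^2\tilde N = 2\tilde H|\lambda|^2|df|^2\bar\lambda N\lambda$, reduces the claim to showing
\begin{equation*}
\sum_{i=x,y}\bigl(\partial_i\bar\lambda\,f_i\,\lambda + \bar\lambda\,f_i\,\partial_i\lambda\bigr) = 2\rho|df|^2\bar\lambda N\lambda.
\end{equation*}

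The crucial geometric fact enabling this identification is that left multiplication by the Gauss map acts as the almost complex structure on the tangent plane of $f$: $Nf_x = f_y$ and $Nf_y = -f_x$. This rewrites the "symmetric" contraction $f_x\partial_x\lambda + f_y\partial_y\lambda$ as $N(f_x\partial_y\lambda - f_y\partial_x\lambda)$, i.e.\ exactly $N$ times the skew combination that the Dirac equation relates to $\rho$, namely $\bar\lambda(f_x\partial_y\lambda - f_y\partial_x\lambda) = \rho|df|^2$. The corresponding reduction of the $\partial_i\bar\lambda\,f_i\,\lambda$-term uses the quaternion-conjugate identity $(\partial_x\bar\lambda\,f_y - \partial_y\bar\lambda\,f_x)\lambda = \rho|df|^2$, which follows from the Dirac equation by conjugating both sides and invoking $\bar f = -f$, exactly as in the proof of Lemma~\ref{lemma}.

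The main obstacle is quaternionic non-commutativity: to transport the factor $N$ past $\bar\lambda$ one must invert the Dirac equation for $(f_x\partial_y\lambda - f_y\partial_x\lambda)$, which introduces a factor $\lambda/|\lambda|^2$, and these factors must recombine correctly into the outer $\bar\lambda N\lambda$ direction. Once the two Dirac identities are combined with the anticommutation $\{N,f_i\}=0$, both mixed terms contribute $\rho|df|^2\bar\lambda N\lambda$, giving the desired scalar identity and hence $\tilde H|d\tilde f| = H|df| + \rho|df|$.
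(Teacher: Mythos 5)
The paper does not actually prove this lemma --- it is quoted from \cite{KPP} without proof --- so your argument is not competing with anything in the text; it supplies the missing computation. Your strategy is the standard one and is structurally sound: reduce to the scalar identity $\tilde H|\lambda|^2 = H+\rho$ via $|d\tilde f| = |\lambda|^2|df|$, derive $\tilde N = \bar\lambda N\lambda/|\lambda|^2$ from $*d\tilde f = \bar\lambda(*df)\lambda$, expand $\Delta\tilde f$ by the product rule, and identify the mixed terms using $Nf_x = f_y$, $Nf_y = -f_x$ together with the Dirac equation and its quaternionic conjugate. Each of these steps is correct.

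The gap sits in the very last step, and it is exactly the ``main obstacle'' you name and then wave through. Your two Dirac identities are the two-form version $\bar\lambda(f_x\partial_y\lambda - f_y\partial_x\lambda) = \rho|df|^2$ and its conjugate $(\partial_x\bar\lambda\, f_y - \partial_y\bar\lambda\, f_x)\lambda = \rho|df|^2$. Solving these for the brackets introduces $\bar\lambda^{-1} = \lambda/|\lambda|^2$ and $\lambda^{-1} = \bar\lambda/|\lambda|^2$ respectively, so the two mixed terms sum to $2\rho|df|^2\,\bar\lambda N\lambda/|\lambda|^2$, \emph{not} $2\rho|df|^2\,\bar\lambda N\lambda$; carried to the end this yields $\tilde H|d\tilde f| = H|df| + \rho|df|/|\lambda|^2$, which is not \eqref{Rhoequation} because $|\lambda|^2$ is not constant. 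The source of the slip is that the two-form identity and the operator identity $D\lambda = \rho\lambda$, i.e.\ $f_x\partial_y\lambda - f_y\partial_x\lambda = -\rho|df|^2\lambda$, define $\rho$'s that differ by the non-constant factor $-|\lambda|^2$ (the paper's own proof of lemma \ref{lemma} calls them ``equivalent,'' which invites exactly this confusion). Since the $\rho$ in \eqref{Rhoequation} is the one from the eigenvalue equation, you must use the operator form: then $f_x\partial_y\lambda - f_y\partial_x\lambda$ is already a right multiple of $\lambda$ and its conjugate a left multiple of $\bar\lambda$, no inversion is needed, and both mixed terms contribute $\rho|df|^2\,\bar\lambda N\lambda$ on the nose, up to the overall orientation sign that is fixed once you commit to a convention for $N$ and for $\Delta f = \pm 2H|df|^2N$. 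With that substitution your computation closes and proves the lemma.
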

\begin{Rem}
For a Dirac surface $f: \C \rightarrow \R^3$
we call the $1-$form $H|df|$ constant, if  $H|df(X)|$ is a constant function for a constant vector field $X$ on $\C$ with respect to the standard metric. The mean curvature half density is scale invariant. Further, it  measures the local bending of the surface, since for a torus $\C/\Gamma$ the Willmore energy $\int_{\C/\Gamma} H^2 |df(X)|^2 vol_{\C}$ is the
$L^2-$norm of its mean curvature half density.
 \end{Rem}

\begin{Rem}
Let $f_0$ be a conformally immersed surface in $\R^3$ with constant Gau\ss ian curvature and constant mean curvature. Further, let $g$ be the induced metric of $f_0$. Since a Dirac surface is characterized by the property that $H^2 f^*g_{\R^3}$ has constant Gau\ss ian curvature we have $H^2 f^*g_{\R^3} = c g$ for a constant $c.$ For a conformal map $f$ there is always a local spin transformation with $df = \bar \lambda d f_0 \lambda.$ Thus the corresponding $\rho$ must be constant and we need to solve an eigenvalue problem to obtain Dirac surfaces.
\end{Rem}

We want to specialize in the following to the case where $H^2 f^*g_{\R^3}$ is a flat metric, i.e., where the universal covering of $M$ is a plane. We can use the $\{\jj, \kk\}-$plane as the reference surface. 
The Dirac operator of the plane can be easily computed.
\begin{Lem}[\cite{CPS}]
Let $f = \jj z $ be  the $\{\jj, \kk\}-$plane, where $z$ is the coordinate on $\C$. Then is Dirac operator its given by 
$$D =  \begin{pmatrix} 0 & 2\ii \partial \\ 2\ii\bar \partial & 0\end{pmatrix},$$
where $\partial = \tfrac{\partial}{\partial z}$ and $\partial = \tfrac{\partial}{\partial \bar z}.$

\end{Lem}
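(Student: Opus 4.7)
The plan is to apply the definition of the Dirac operator from Lemma~\ref{lemma} directly and then unravel the resulting quaternion-valued expression using the real-$\C$-linear splitting $\H = \C \oplus \jj\C$. Writing $z = x + \ii y$, the reference immersion becomes $f = \jj x - \kk y$, so $df(\partial_x) = \jj$, $df(\partial_y) = -\kk$, and the induced volume form is $f^*\vol = dx \wedge dy$. Evaluating the numerator of $D\lambda = -(df \wedge d\lambda)/f^*\vol$ on the frame $(\partial_x,\partial_y)$ gives the compact formula
\[
D\lambda \;=\; -\jj\,\partial_y\lambda \;-\; \kk\,\partial_x\lambda.
\]

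Next I would substitute $\lambda = \lambda_1 + \jj\lambda_2$ with $\lambda_1,\lambda_2 \in \C$ and simplify using the quaternionic rules $\jj\alpha = \bar\alpha\jj$ for $\alpha \in \C$, $\jj^2 = -1$, and $\kk = \ii\jj$. The two summands $-\jj\,\partial_y\lambda$ and $-\kk\,\partial_x\lambda$ each split along $\H = \C \oplus \jj\C$: the pure-$\C$ parts collect only derivatives of $\lambda_2$, while the $\jj\C$-parts collect only derivatives of $\lambda_1$ (or rather of $\bar\lambda_1$, depending on the side from which $\jj$ is factored). Passing to Wirtinger operators via $\partial_x = \partial + \bar\partial$ and $\partial_y = \ii(\partial - \bar\partial)$, each of the two parts telescopes to a single term. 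After a final application of the identity $X\jj = \jj\,\bar X$ for $X \in \C$ to move the factor $\jj$ to the left, one arrives at
\[
D\lambda \;=\; 2\ii\,\partial\lambda_2 \;+\; \jj\cdot 2\ii\,\bar\partial\lambda_1,
\]
which is precisely the action of the stated matrix on the column $(\lambda_1,\lambda_2)^{\mathsf T}$ under the identification $\lambda \leftrightarrow (\lambda_1,\lambda_2)^{\mathsf T}$.

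The only place requiring real care is the bookkeeping with non-commuting factors: each time $\jj$ is moved past a complex scalar it conjugates that scalar, and each time $\jj$ is moved past $\ii$ it produces a sign through $\jj\ii = -\ii\jj$. Once this is tracked correctly, the computation is mechanical, and the ``obstacle'' amounts to nothing more than verifying that the various conjugations and signs conspire so that the off-diagonal entries are $2\ii\partial$ and $2\ii\bar\partial$ rather than any of the nearby competitors (such as $-2\ii\partial$ or $2\ii\bar\partial\bar{\,\cdot\,}$).
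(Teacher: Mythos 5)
Your computation is correct and follows essentially the same route as the paper: a direct expansion of $D\lambda = -\,(df\wedge d\lambda)/f^*\!\vol$ for $\lambda=\lambda_1+\jj\lambda_2$, using the commutation rules for $\jj$ past complex scalars and the identification of the induced volume form. The only cosmetic difference is that you evaluate on the real frame $(\partial_x,\partial_y)$ and convert to Wirtinger operators afterwards, whereas the paper works with $dz,d\bar z$ throughout (via $\jj\,dz=d\bar z\,\jj$ and $dz\wedge d\bar z=-2\ii\,f^*\!\vol$); both yield $D\lambda = 2\ii\,\partial\lambda_2+\jj\,2\ii\,\bar\partial\lambda_1$.
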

\begin{proof}
Let $\lambda : M \rightarrow \H$ be a quaternionic valued function. Then we can write it as $\lambda = \lambda_1 + \jj \lambda_2$ for two complex functions $\lambda_1, \lambda_2$ and
\begin{equation*}
df \wedge d\lambda = \jj dz \wedge (\partial \lambda_1 dz + \bar \partial \lambda_1 d\bar z + \jj \partial \lambda_2 dz +\jj \bar \partial \lambda_2 d \bar z).
\end{equation*}
Since $\jj \lambda_i = \bar \lambda_i \jj$ and $\jj dz = d\bar z \jj$ we get
\begin{equation*}
df \wedge d\lambda = \jj \bar \partial \lambda_1 dz \wedge d \bar z + \partial \lambda_2 dz \wedge d\bar z.
\end{equation*}
The volume form of the plane is given by $  dz \wedge d\bar z  = - 2\ii (\jj z)^*vol$, hence we obtain the claimed form for the Dirac operator.
\end{proof}

We have shown that Dirac surfaces for which $H^2 f^*g_{\R^3}$ is a flat metric are given by the following system of differential equations.
\begin{The}
Let $f : \C \rightarrow \R^3$ be a Dirac surface such that $H^2 f^*g_{\R^3}$ is a flat metric. Then 
$f$ is given by a spin transformation of the plane $f_0 = \jj z$ by a spinor field $\lambda$ solving the eigenvalue problem
$$D \lambda = \mu \lambda, \quad \mu \in \R.$$ 
More precisely, for $\lambda = \lambda_1 + \jj \lambda_2,$ where $\lambda_1, \lambda_2$ are complex valued functions, we obtain the following system of equations:
\begin{equation}\label{PDE}
\begin{split}
2 \ii\partial \lambda_1 &=  \mu \lambda_2 \\
2 \ii\bar \partial \lambda_2 &= \mu \lambda_1.
\end{split} 
\end{equation}
\end{The}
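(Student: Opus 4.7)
My plan is to synthesize material already assembled in this section, since most of the individual pieces are in place. The reference surface $f_0 = \jj z$ is a conformal immersion $\C \to \Im\H$ with vanishing mean curvature $H_0 = 0$ and induced metric equal to the standard flat metric of $\C$. The first move is to represent $f$ as a spin transformation of $f_0$: at every point $df \circ df_0^{-1}$ is an orientation-preserving conformal map between two $2$-planes in $\Im\H$, which extends uniquely (given an orientation of the normal) to an orientation-preserving similarity of $\Im\H$. Every such similarity has the form $x \mapsto \bar\lambda x \lambda$ for some $\lambda \in \H\setminus\{0\}$, unique up to sign; as $\C$ is simply connected, the local sign choices patch to a smooth nonvanishing $\lambda: \C \to \H$ with $df = \bar\lambda\, df_0\, \lambda$.

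Because $df$ is exact, the $1$-form $\eta = \bar\lambda\, df_0\, \lambda$ is closed, so Lemma \ref{lemma} furnishes a real-valued function $\rho$ on $\C$ with $D\lambda = \rho\lambda$, where $D$ is the Dirac operator of $f_0$.

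The crux is to show that $\rho$ is constant. Applying \eqref{Rhoequation} with reference $f_0$, where $H_0 = 0$, gives $H|df| = \rho\,|df_0|$. The hypothesis that $H^2 f^*g_{\R^3}$ is a flat metric together with the conformality of $f$ forces the conformal factor $H^2|df|^2$, taken relative to the standard flat metric on $\C$, to be a positive constant. Hence $H|df|$ is a constant $1$-form on $\C$, and since $|df_0|$ is also a nonzero constant $1$-form on $\C$, the real scalar $\rho$ must equal a real constant $\mu$; so $D\lambda = \mu\lambda$.

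Finally, substituting the explicit form of the Dirac operator of the plane from the preceding lemma and writing $\lambda = \lambda_1 + \jj\lambda_2$, one reads off the PDE system by matching the $1$- and $\jj$-components of the quaternionic equation $D\lambda = \mu\lambda$. The main obstacle is the argument in the first paragraph: producing a globally defined, smooth nonvanishing $\lambda$ implementing the spin transformation. Once this is in hand, the remaining steps are direct applications of the lemmas already proved in this section.
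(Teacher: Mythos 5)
Your argument follows the paper's own (largely implicit) proof of this theorem: the Remark preceding it supplies the local spin transformation $df = \bar\lambda\, df_0\, \lambda$ off the reference plane, Lemma \ref{lemma} gives $D\lambda = \rho\lambda$, equation \eqref{Rhoequation} with $H_0 = 0$ identifies $H|df| = \rho\,|df_0|$, flatness of $H^2 f^*g_{\R^3}$ forces $\rho$ to be a real constant $\mu$, and the matrix form of the planar Dirac operator then yields \eqref{PDE}. The only caveat --- one you share with the paper --- is that a flat metric conformal to $|dz|^2$ on all of $\C$ has conformal factor $e^{2u}$ with $u$ harmonic rather than automatically constant; constancy is genuine on the torus (the case of actual interest) or after normalizing the holomorphic coordinate, which is the convention the paper adopts in its introduction when it takes $H^2 f^*g_{\R^3}$ to be a constant multiple of $g_{\R^2}$.
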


\subsection{Doubly periodic solutions}
In order to obtain compact solutions we specialize to surfaces with doubly periodic differentials. This means that the differential of the surface are defined on the torus $\C / \Gamma$
for a appropriate lattice $\Gamma.$ Complex valued smooth functions on $\C /\Gamma$ are given by a Fourier series
$$\psi = \sum_{\omega \in \Gamma^* }a_\omega e^{\ii<\omega, z>}, \quad a_\omega \in \C,$$

where $\Gamma^* = \{\omega \in \C | <\omega, \gamma> \in 2 \pi \Z, \text{ for } \gamma \in \Gamma \}$ is the dual lattice to $\Gamma.$

For $\lambda = \lambda_1 + \jj \lambda_2$ we have 
\begin{equation}\label{solutions}df = (\jj \lambda_1^2 + \lambda_1\bar \lambda_2)dz + (\jj \lambda_2^2 - \bar\lambda_1\lambda_2)d\bar z,
\end{equation} 
thus the functions $\lambda_i$ need only to be periodic on a double covering of $\Gamma.$  To be more precise we have
\begin{Lem}\label{lambdas}

A solution to equation \eqref{PDE} such that $df = \bar \lambda \jj dz \lambda$ is doubly periodic  is given by 

\begin{equation}
\label{lambdas2}
\begin{split}
\lambda_1 &= \sum_{\omega \in \Gamma'} a_\omega e^{\ii<\omega + \omega_0, z>}\\
\lambda_2 &= -\sum_{\omega \in \Gamma' } \tfrac{1}{\mu} a_\omega (\bar \omega + \bar \omega_0) e^{\ii<\omega + \omega_0, z>},
\end{split} 
\end{equation}
where $\Gamma'  \subset \Gamma^*$ is the set of dual lattice vectors $\omega$ with $|\omega + \omega_0|^2= \mu^2$  and $\omega_0 \in \tfrac{1}{2}\Gamma^*.$
\end{Lem}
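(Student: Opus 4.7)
The lemma provides an explicit family of solutions, so the strategy is verification by direct substitution: first confirm that $\lambda_1,\lambda_2$ satisfy \eqref{PDE}, then confirm that the resulting $df$ is $\Gamma$-periodic. For the first step, I use $\partial e^{\ii<\omega+\omega_0,z>} = \tfrac{\ii}{2}(\bar\omega+\bar\omega_0)e^{\ii<\omega+\omega_0,z>}$ together with the analogous identity for $\bar\partial$. Then $2\ii\partial\lambda_1=\mu\lambda_2$ is immediate from the defining formula for $\lambda_2$. Term-by-term differentiation of $\lambda_2$ gives $2\ii\bar\partial\lambda_2=\tfrac{1}{\mu}\sum_{\omega\in\Gamma'} a_\omega|\omega+\omega_0|^2 e^{\ii<\omega+\omega_0,z>}$, which equals $\mu\lambda_1$ precisely because every $\omega\in\Gamma'$ satisfies $|\omega+\omega_0|^2=\mu^2$ by definition of $\Gamma'$.

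For the periodicity of $df$ I exploit that \eqref{solutions} is quadratic in $\lambda$. Each summand of $df$ is a product of two exponentials of the form $e^{\pm\ii<\omega+\omega_0,z>}$, so under $z\mapsto z+\gamma$ with $\gamma\in\Gamma$ it picks up a phase $e^{\ii<\eta,\gamma>}$, where $\eta=\pm(\omega+\omega_0)\pm(\omega'+\omega_0)$. If the two signs disagree, then $\eta=\pm(\omega-\omega')\in\Gamma^*$; if they agree, then $\eta=\pm\bigl((\omega+\omega')+2\omega_0\bigr)\in\Gamma^*$, using $\omega_0\in\tfrac{1}{2}\Gamma^*$. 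In either case $<\eta,\gamma>\in 2\pi\Z$ and the phase is trivial, so $df$ descends to $\C/\Gamma$.

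The key conceptual point, and the main obstacle to appreciating the statement, is the role of the shift $\omega_0$: individually the components $\lambda_1,\lambda_2$ need only be $\Gamma$-periodic up to a common sign $e^{\ii<\omega_0,\gamma>}\in\{\pm 1\}$, and this sign is erased only by the quadratic structure of $df$. This ``spin structure'' freedom is what makes the ansatz useful, since without it one would be restricted to $\omega_0=0$ and the eigenvalue condition $|\omega|^2=\mu^2$ could be satisfied on a lattice only in very degenerate cases. Convergence of the Fourier series poses no difficulty once the closing conditions analyzed later in the paper force $\Gamma'$ to be finite.
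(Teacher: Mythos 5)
Your proof is correct and takes essentially the same route as the paper, which offers no formal proof beyond the preceding observation that $df$ is quadratic in $\lambda$ and hence the spinor need only be periodic on a double cover of $\Gamma$ --- exactly the point you make about the role of $\omega_0$; your direct verification of \eqref{PDE} and of the $\Gamma$-periodicity of $df$ correctly fills in the details. One small inaccuracy in your closing remark: $\Gamma'$ is finite simply because it is the intersection of the discrete set $\omega_0+\Gamma^*$ with the circle $|\cdot|=\mu$, not because of the closing conditions analyzed later.
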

\begin{Rem}
The half lattice vector $\omega_0$ corresponds to the spin structure of the spinor $\lambda, $ i.e., the the spin structure of the surface.
\end{Rem}
\begin{Rem}
It is worth to note that the differential of a Hamiltonian stationary Lagrangian torus in $\C^2 \cong \H$, classified in \cite{HR}, satisfies the same system of differential equations as the spinors of Dirac tori, which can be seen as follows. Hamiltonian stationary Lagrangian tori are critical points of the area functional under variations preserving the Lagrangian property. The right normal of the surface $f$, i.e., the $\Im \H$ valued function $N$ given by $df(X) = N df(\ii X)$ is perpendicular to $\ii$ and is thus of the form $N = \jj e^{\ii \beta}. $ Further, the function $\beta$ is harmonic. On a torus  we have $\beta = const <\beta_0, z>,$ for a $\beta_0 \in \Gamma'$, which we can assume without loss of generality to be $\tfrac{1}{2}\ii$. It can be shown that the differential of a Hamiltonian stationary Lagrangian torus is given by $df= e^{\ii \beta/2}(dx +  \jj dy) \lambda, $ for a quaternionic function $\lambda.$ Then the closeness of the $1-$form gives the desired differential equations for $\lambda= \lambda_1 + \jj \lambda_2$. Nevertheless, the closing condition for both surface classes differs.
\end{Rem}

In order to obtain closed surfaces, i.e., Dirac tori, we need to consider closing conditions. A closed  periodic $1-$form on $\C/\Gamma$
$$\eta = \sum_{\omega \in \Gamma^*}e^{\ii<\omega, z>}  (a_\omega dz  + b_\omega  d\bar z) $$
integrates to a periodic function on $\C/\gamma$ if and only if $a_0 = b_0 = 0.$ 

Since $df$ is of the form \eqref{solutions}, the closing condition is  
$$\int_{\C/\Gamma} \lambda_1\bar \lambda_2 = 0 \text{ and } \int_{\C/\Gamma} \lambda_i^2 = 0,$$
which yields the following
\begin{The} \label{The}
Let $df$ be given by \eqref{solutions} with $\lambda_1$ and $ \lambda_2$ as in lemma \ref{lambdas}. Then the Dirac surface $f$ closes to a torus if and only if the following conditions hold
\begin{equation}\label{Closing}
\begin{split}
(1) &\sum_{\omega \in \Gamma'} |a_\omega|^2 (\omega+ \omega_0) = 0\\
(2) &\sum_{\omega \in \Gamma'} a_\omega a _{-\omega- 2 \omega_0} = 0\\
(3) &\sum_{\omega \in \Gamma'}a_\omega a _{-\omega- 2 \omega_0} (\bar \omega + \bar\omega_0)^2 = 0.
\end{split}
\end{equation}
\end{The}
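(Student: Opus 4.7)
The plan is to substitute the explicit Fourier expansions \eqref{lambdas2} of Lemma \ref{lambdas} into the three integral identities
\[
\int_{\C/\Gamma}\lambda_1\bar\lambda_2 = 0, \qquad \int_{\C/\Gamma}\lambda_1^2 = 0, \qquad \int_{\C/\Gamma}\lambda_2^2 = 0
\]
derived just above the theorem statement, and to read off each closing condition from the resulting constant Fourier mode. Since any function on $\C/\Gamma$ of the form $\sum_{\xi\in\Gamma^*}c_\xi e^{\ii\langle\xi,z\rangle}$ integrates over a fundamental domain to $c_0$ times the area, each of these three integral conditions is equivalent to the vanishing of a single zero-frequency coefficient of the corresponding product.

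For $\lambda_1\bar\lambda_2$ the exponents subtract, so the constant term is extracted from pairs $(\omega,\omega')\in\Gamma'\times\Gamma'$ with $\omega=\omega'$; a direct substitution then produces $-\tfrac{1}{\mu}\sum_{\omega\in\Gamma'}|a_\omega|^2(\omega+\omega_0)$, which is condition~(1). For $\lambda_1^2$ and $\lambda_2^2$ the exponents add, and the constant term is indexed by pairs with $\omega+\omega'+2\omega_0=0$. The essential observation is that the involution $\omega\mapsto-\omega-2\omega_0$ preserves $\Gamma'$: its image lies in $\Gamma^*$ because $\omega_0\in\tfrac12\Gamma^*$ forces $2\omega_0\in\Gamma^*$, while the eigenvalue constraint $|\omega+\omega_0|^2=\mu^2$ is preserved since $|-(\omega+\omega_0)|=|\omega+\omega_0|$. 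Thus the paired index $\omega'=-\omega-2\omega_0$ always lies in $\Gamma'$; condition~(2) emerges directly from $\lambda_1^2$, while using $\overline{\omega'}+\bar\omega_0=-(\bar\omega+\bar\omega_0)$ the contribution from $\lambda_2^2$ collapses (up to the overall factor $-\tfrac{1}{\mu^2}$) to $\sum_\omega a_\omega a_{-\omega-2\omega_0}(\bar\omega+\bar\omega_0)^2$, giving condition~(3).

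The main subtlety is precisely this self-pairing property of $\Gamma'$ under the shifted inversion $\omega\mapsto-\omega-2\omega_0$, which is what lets the Fourier pairings in (2) and (3) close up consistently with the eigenvalue constraint; apart from checking this, the proof is routine Fourier bookkeeping. Since the three resulting scalar identities are exactly the averages of the three quaternion-valued integrands and each implication is reversible, the biconditional in the theorem follows at once.
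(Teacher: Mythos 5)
Your proposal is correct and is exactly the argument the paper intends: the theorem is stated as an immediate consequence of substituting the Fourier expansions \eqref{lambdas2} into the three integral conditions $\int\lambda_1\bar\lambda_2=0$, $\int\lambda_i^2=0$ and reading off the zero Fourier mode, with the self-pairing $\omega\mapsto-\omega-2\omega_0$ of $\Gamma'$ (which the paper also notes right after the theorem) handling conditions (2) and (3). No discrepancy.
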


The cardinal number of the set $\Gamma'$, i.e., the lattice vectors of $\Gamma^*$ with  $|\omega + \omega_0|^2= \mu^2$,  is even since for $\omega \in \Gamma'$ we have that $-\omega-2 \omega_0 \in \Gamma'$. If $\#\Gamma' = 2,$ then the only solution to \eqref{PDE} satisfying the closing conditions $(1) - (3)$ is the zero function. If $\Gamma'$ has $4$ elements, i.e., 
$$\Gamma' = \{\omega_1, (-\omega_1-2\omega_0), \omega_2, (-\omega_2-2\omega_0)\},$$ 
then equation $(2), (3)$ in \eqref{Closing} means that the vector $v_0:= (a_{\omega_1}a_{-\omega_1-2\omega_0},a_{\omega_2}a_{-\omega_2-2\omega_0} )$ is perpendicular to the vectors $v_1 := (1,1)$ and $v_2:=((\bar \omega_1 + \bar\omega_0)^2, (\bar \omega_2 + \bar \omega_0)^2)$ in $\C^2$ with respect to the complex linear inner product. Thus $v_0 = (0,0)$ if $v_1$ and $v_2$ are linear independent. But if  $v_2 = c v_1$ we obtain that $\omega_2 = \omega_1$  or $\omega_2 = \omega_1$ and $\Gamma'$ has only $2$ elements. 

 In order to meet all conditions,  we therefore need $\#\Gamma' \geq 6$. 
Let $\omega_1, \omega_2, \omega_3 \in \Gamma' \cap \{z \in \C| \Re(z) \geq 0\}.$ 
We can choose $a_{-\omega_i-2\omega_0} = a_{\omega_i}\ii$ to solve equation $(1)$. For all other $\omega \in \Gamma'$ we choose $a_\omega = 0.$ Then equation $(2)$ and $(3)$ reduces to the fact that the vector $v_0 := (a_{\omega_1}^2, a_{\omega_2}^2, a_{\omega_3}^2)$ is perpendicular to the vectors 
$$(1, 1, 1) \text{ and } ((\bar \omega_1+\bar\omega_0)^2, (\bar \omega_2+\bar\omega_0)^2, (\bar \omega_3+\bar\omega_0)^2)$$ in $\C^3.$ The space of such vectors $v_0$ is at least a complex $1-$dimensional.
\begin{Rem}
Since the Willmore energy is uniformly distributed and $H|df(\tfrac{\partial}{\partial x})| = \mu$, the Willmore energy of a Dirac torus is given by $W(f) = \mu  \cdot vol(\C/\Gamma).$ Thus different solutions of \eqref{lambdas2} to a given eigenvalue $\mu$ and lattice $\Gamma$ have the same Willmore energy.
\end{Rem}
\begin{The}\label{Dirac}
A Dirac torus with a given conformal class exists if and only if there exist a rectangular sub lattice of $\Gamma^*$spanned by two vectors $\omega_1, \omega_2$ with $|\omega_1|^2 = b|\omega_2|^2$ and $b\in \Q.$ \end{The}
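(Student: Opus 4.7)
My plan is to prove both implications of the equivalence separately, using the analysis preceding Theorem~\ref{Dirac} that reduces the existence of a Dirac torus to the existence of a shifted lattice configuration with $\#\Gamma' \geq 6$.

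\textbf{Sufficiency.} Assume $\omega_1, \omega_2 \in \Gamma^*$ are orthogonal with $|\omega_1|^2 = (p/q)|\omega_2|^2$ for coprime positive integers $p, q$. I set $\omega_0 = 0$ and look for a radius $\mu$ so that the circle $|\omega|^2 = \mu^2$ contains at least three antipodal pairs in $\Z\omega_1 + \Z\omega_2 \subseteq \Gamma^*$; the discussion preceding the theorem then produces a nontrivial solution of the closing conditions~(1)--(3) of Theorem~\ref{The}. A direct expansion yields
\[
(q+1)^2 p + (p-1)^2 q \;=\; (p+q)(pq+1) \;=\; (q-1)^2 p + (p+1)^2 q,
\]
so the vectors $(q \pm 1)\omega_1 + (p \mp 1)\omega_2$ share the squared norm $\mu^2 := (p+q)(pq+1)|\omega_2|^2/q$, and together with their reflections across the coordinate axes they furnish at least three antipodal pairs, except in the degenerate case $p = q = 1$ which I handle separately using $\mu^2 = 5|\omega_2|^2$ and the representations $\pm\omega_1 \pm 2\omega_2$, $\pm 2\omega_1 \pm \omega_2$ of the square lattice.

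\textbf{Necessity.} A Dirac torus forces $\#\Gamma' \geq 6$, so the affine lattice $\Gamma^* + \omega_0$ contains three points $\tilde\omega_1, \tilde\omega_2, \tilde\omega_3$ on the circle $|z|^2 = \mu^2$ with $\tilde\omega_i \neq \pm \tilde\omega_j$ for $i \neq j$. The chord-diameter identity
\[
(\tilde\omega_i - \tilde\omega_j)\cdot(\tilde\omega_i + \tilde\omega_j) \;=\; |\tilde\omega_i|^2 - |\tilde\omega_j|^2 \;=\; 0
\]
shows that $u := \tilde\omega_1 - \tilde\omega_2$ and $v := \tilde\omega_1 + \tilde\omega_2$ are perpendicular; both lie in $\Gamma^*$ because $2\omega_0 \in \Gamma^*$, and both are nonzero by the distinctness and non-antipodality. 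Hence $\Z u + \Z v$ is a rank-two, therefore finite-index, rectangular sublattice of $\Gamma^*$, giving the candidate $\omega_1 := u$, $\omega_2 := v$.

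It remains to verify $|u|^2/|v|^2 \in \Q$. Since $\Z u + \Z v$ has finite index in $\Gamma^*$, one has $\Gamma^* \subseteq \Q u + \Q v$, so the second perpendicular pair $u' := \tilde\omega_1 - \tilde\omega_3$, $v' := \tilde\omega_1 + \tilde\omega_3$ can be expanded as $u' = au + bv$, $v' = cu + dv$ with $a, b, c, d \in \Q$. The orthogonality $u' \cdot v' = 0$ combined with $u \perp v$ collapses to $ac|u|^2 + bd|v|^2 = 0$, yielding $|u|^2/|v|^2 = -bd/(ac) \in \Q$ provided $ac \neq 0$. I expect the main obstacle to be excluding the degeneracy $ac = 0$: if $a = 0$ the perpendicularity forces $d = 0$ as well, after which tracking the ensuing rational relations among $\tilde\omega_1, \tilde\omega_2, \tilde\omega_3$ forces either two of them to coincide or a pair to be antipodal, both excluded by hypothesis; the case $c = 0$ is symmetric. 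Apart from this bookkeeping and the small case split in the sufficiency construction, the argument reduces to elementary linear algebra over $\Q$.
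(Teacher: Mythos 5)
Your argument is correct, and while the sufficiency direction is essentially the paper's idea in different clothes, the necessity direction takes a genuinely different and in some respects cleaner route. For sufficiency the paper normalizes the rectangular sublattice to $\Z 1+\Z(i\tau)$ with $\tau^2=p/q$ and uses the identity $(p-q)^2+4pq=(p+q)^2$ to exhibit the six vectors $\pm(p+q)$, $\pm(p-q)\pm 2q\tau i$; your identity $(q\pm1)^2p+(p\mp1)^2q=(p+q)(pq+1)$ plays exactly the same role and yields up to eight vectors, with the same degenerate square case $p=q=1$ handled separately — no real difference. For necessity the paper rotates and scales so that one antipodal pair becomes $\pm1$, writes a $\Z$-linear dependence $k+l(x_1+iy_1)=m(x_2+iy_2)$ among the circle points, and extracts $x_1\in\Q$, $y_1^2\in\Q$ by Diophantine manipulation (leaving implicit the case $kl=0$, and treating $\omega_0\neq 0$ by a separate reduction to $\tfrac12\Gamma^*$). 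You instead use the chord--diameter observation that $u=\tilde\omega_1-\tilde\omega_2$ and $v=\tilde\omega_1+\tilde\omega_2$ are orthogonal, nonzero, and lie in $\Gamma^*$ because $2\omega_0\in\Gamma^*$; this produces the rectangular sublattice coordinate-free and absorbs the nontrivial spin structure into the main argument rather than requiring a separate case. The rationality of $|u|^2/|v|^2$ then falls out of expanding the second orthogonal pair $u',v'$ in the $\Q$-basis $\{u,v\}$, and your treatment of the degeneracy $ac=0$ is sound: $a=0$ forces $d=0$, and then $b=c=1$ and $\tilde\omega_3=-\tilde\omega_2$, contradicting non-antipodality (symmetrically $c=0$ forces $\tilde\omega_3=\tilde\omega_2$). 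Both proofs rely identically on the preceding reduction to $\#\Gamma'\geq 6$ and on the solvability of the closing conditions once three antipodal pairs are available, so the logical skeleton is shared; what your version buys is the unified handling of $\omega_0$ and the avoidance of the normalization and the $kl=0$ gap in the paper's Diophantine step.
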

\begin{proof}
We first show the statement for $\omega_0 \in \Gamma.$
Let  $\Gamma^*$ be a lattice with a rectangular sub lattice, then we can assume without loss of generality  that this sub lattice is spanned by  the vectors $1, i\tau$ for $\tau \in \R \setminus \{1\}.$ For $\tau = 1$ we have a square lattice and there always exist a non trivial solution.
We want to show that we have at least $6$ vectors with the same length in $\Gamma^*$  if  $\tau^2 \in \Q.$

In this case let $q \in \Z$ with $q \tau^2 = p \in \N.$  Then we can define $m = p-q$ and $n= 2q$ and obtain 
$$(p-q)^2 + 4q^2 \tau^2 = (p-q)^2 + 4pq = (p+q)^2$$
Thus there exist at least $6$ lattice vectors with length $(p+q)^2$, namely $\pm(p-q) \pm i 2q \tau$  and $\pm(p+q)$.

On the other hand if we have $6$ elements of $\Gamma^*$ with the same length,  we can assume that the vectors are given by
$$1, -1, \pm (x_1 + iy_1), \pm (x_2 + iy_2). $$
Since these vectors spann a lattice, we get that there are integers $k, l$ and $m \in \Z$ with 

$$k + l(x_1 + iy_1) = m (x_2 + iy_2)$$
and therefore 
$$ (k + l x_1)^2 + l y_1^2= m^2,$$
where we used that $x_2+ iy_2$ lie on the unit circle.
Thus $x_1 \in \Q$ and there exist $a \in \Z$ with $ax_1 \in \Z$ Moreover, since $x_1^2 + y_1^2  = 1$ we have that $y_1^2 \in \Q,$ and we obtain that $1$ and $iay_1$ is a rectangular sub lattice of the desired form. 

If $\omega_0 \neq 0$ mod $\Gamma, $ we have that $\omega_0 + \Gamma^* \subset \tfrac{1}{2}\Gamma^*.$ Thus a necessary condition for $\# \Gamma' \geq 6$ is that $\tfrac{1}{2}\Gamma^*$ has a rectangular sub lattice of the in the stated form. But then also $\Gamma^*$ has such a sub lattice and there is also a Dirac torus with $\omega_0 = 0$ for this $\Gamma^*$.
\end{proof}

\begin{Exa}
We consider a conformal immersion from $\C/\Gamma$ to $\R^3,$ where the lattice $\Gamma$ is spanned by 
$2 \pi ,$ and $2\pi i$. Then the dual lattice $\Gamma^*$ is spanned by $1$ and $i$. We choose $\omega_0 = 0.$ The smallest possible eigenvalue to obtain a torus is $\mu = \sqrt{5}$ and $\Gamma'$ consists of the vectors
$$2+i, 2-i, 1+ 2i, 1-2i, -2-i, -2+i, -1-2i, -1+2i.$$ 
Then the following choice of coefficients leads to a torus, see figure \ref{figure}. 
\begin{equation}
\begin{split}
&a_{1+2i} =  0, \quad a_{-1-2i} = 0\\
&a_{-2-i} = i a_{2+i}, \quad  a_{-2+i} = i a_{2-i}, \quad  a_{-1+2i} = ia_{1-2i}\\
&a_{2+i} = \sqrt{\tfrac{3}{2}} (1+i), \quad a_{2-i} = \tfrac{1}{\sqrt 2 }(1-3i), \quad a_{1-2i}  = 2.
\end{split}
\end{equation}

The Willmore energy of the surface is given by $\mu \cdot vol(\C/\Gamma) = 4\sqrt5 \pi^2.$
\begin{figure}[htbp]\label{figure}
  \centering
  \includegraphics[scale=0.31]{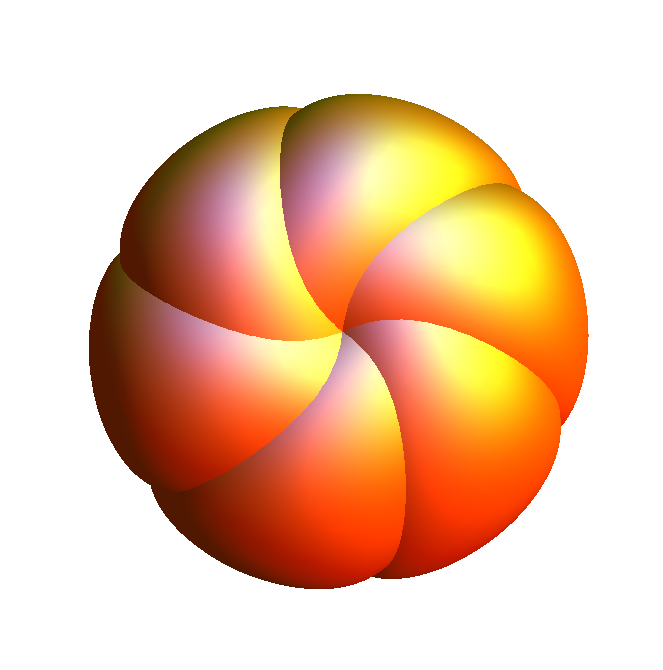}
  \caption{Dirac torus in $\R^3$ with conformal type given by $2\pi$ and $2\pi i.$ }
\end{figure}

\end{Exa}


\end{document}